\documentclass[twoside,10pt,a4paper,reqno]{amsart}

\usepackage{amsmath, amsfonts,amsthm,amssymb,amsbsy,upref,color,graphicx,amscd,enumerate}
\usepackage[colorlinks=true,linkcolor=blue,citecolor=blue,%
filecolor=blue,urlcolor=blue,pageanchor=true,plainpages=false,%
linktocpage]{hyperref}
\usepackage[active]{srcltx}
\usepackage{mathrsfs}
\usepackage{tikz} 
\usetikzlibrary{trees}
\usepackage{caption}

\DeclareMathOperator{\diam}{diam}

\def\N{\mathbb{N}}
\def\R{\mathbb{R}}
\def\F{\mathcal{F}}
\def\H{\mathcal{H}}

\newcommand{\A}{\mathcal A}
\newcommand{\Z}{\mathbb Z}

\numberwithin{equation}{section}
\theoremstyle{plain}
\newtheorem{theorem}{Theorem}[section]
\newtheorem{lemma}[theorem]{Lemma}
\newtheorem{corollary}[theorem]{Corollary}
\newtheorem{proposition}[theorem]{Proposition}

\newtheorem{remark}[theorem]{Remark}

\title{Some remarks on nonexistence in the Gamow liquid drop model}

\author[D. Mazzoleni]{Dario Mazzoleni}
\address[Dario Mazzoleni]{Dipartimento di Matematica ``F.~Casorati'', Universit\`a di Pavia, Via Ferrata 5, 27100 Pavia, Italy}
\email[Dario Mazzoleni]{dario.mazzoleni@unipv.it}
\author[A. Pratelli]{Aldo Pratelli}
\address[Aldo Pratelli]{Dipartimento di Matematica\\
Universit\`a di Pisa\\
Largo Bruno Pontecorvo 5, 56127 Pisa, Italy }
\email[Aldo Pratelli]{aldo.pratelli@unipi.it}
\author[B. Ruffini]{Berardo Ruffini}
\address[Berardo Ruffini]{Dipartimento di Matematica\\
Universit\`a di Bologna\\
Piazza di Porta San Donato 5, 40126 Bologna, Italy }
\email[Berardo Ruffini]{berardo.ruffini@unibo.it}

\thanks{D. Mazzoleni and B. Ruffini have been partially supported by PRIN 2022 project 2022R537CS $NO^3$, CUP F53D23002810006, granted by the MUR and funded by the European Union -Next Generation EU, and by the INdAM-GNAMPA 2025 project - CUP E5324001950001.
The authors warmly wish to thank Andrea Moiola for some discussion about the numerical computation of the singular integrals for the proof of Theorem~\ref{thm:nonexcubes}. 
}

\begin{document}
	
\begin{abstract}
In this paper we deal with the celebrated Gamow
liquid drop model, namely, for $m>0$, $N\geq 2$ and $\alpha\in (0,N)$, we study the problem
\[
\min\Big\{P(\Omega)+ V_\alpha(\Omega) : \Omega\subset \R^N,\;|\Omega|=m\Big\}\,,
\]
and focus mostly on the nonexistence of minimizers for large masses. First we show that any possible minimizer in the asymptotic regime must satisfy an optimal thickness property. 
Eventually, in the simplified setting of sets made up as a tiling of cubes, we show that nonexistence occurs for large masses for the whole range of parameters $N\geq 2$ and $\alpha\in (0,N)$.
\end{abstract}

\maketitle

\section{Introduction}\label{sec:introduction}
The celebrated Gamow liquid drop model was introduced by Gamow in~\cite{Gam} to quantitatively describe the binding energy curve for atomic nuclei. After a rescaling of the physical constants, this consists in solving
\begin{equation}\label{eq:pbwithm}
\min\Big\{P(\Omega)+ V_\alpha(\Omega) : |\Omega|=m\Big\}\,.
\end{equation}
Here $\Omega$ denotes a measurable set in $\R^N$, $N\geq 2$, of measure $|\Omega|=m$, $P(\Omega)$ is its De Giorgi Perimeter and 
\begin{equation}\label{Rieszenergy}
V_\alpha(\Omega)=V_\alpha(\Omega,\Omega)=\int_\Omega\int_\Omega\frac{1}{|y-x|^{N-\alpha}}\,dxdy,\qquad \text{with} \qquad V_\alpha(E,F)=\int_E\int_F\frac{1}{|y-x|^{N-\alpha}}\,dx\,dy\,,
\end{equation}
is the Riesz (interaction) energy, for $\alpha\in (0,N)$.
By a rescaling argument, problem~\eqref{eq:pbwithm} is equivalent to 
\begin{equation}\label{eq:pbmin}
\inf\Big\{\mathcal F_{K,\alpha}(\Omega):=P(\Omega)+K V_\alpha(\Omega) : |\Omega|=|B_1|=\omega_N\Big\}\,,
\end{equation}
where $B_1$ is a ball of radius $1$ and $K=(\tfrac{m}{\omega_N})^\frac{1+\alpha}{N}$. Intuitively, since for small values of $m$ (or of $K$) the perimeter is the leading term, one expects that minimizers should exist and look more or less like balls. And in fact, this is well-known to be exact (see~\cite{KM1,KM2,FFMMM,jul14}); actually much more is true, namely, for $m$ small enough the unique minimizer is \emph{exactly} the ball. Conversely, for large values of the mass the leading term is the repulsive term $V_\alpha$, so one easily expects fragmentation of the mass and then nonexistence; one might even imagine much more, that is, that the minimizers exist only if they are a ball. Equivalently, one can say that ``generalized minimizers'' are unions of equal balls so that any two different balls have infinite distance (and so, no interaction). Concerning this second guess, the situation is much less clear. The fact that minimizers are equal balls at infinite distance has been established in~\cite{boncri} only in a very restricted range of exponents; instead, the nonexistence of minimizers for large mass (which is weaker than the ``balls at infinite property'') is now known for every dimension $N\in\N$ but only for exponents $\alpha\in [N-2,N)$, see~\cite{KM1,KM2,LuOtto,FrankKillipNam,FrankNam}. 
A full understanding of the problem in the physical case $N=3$, $\alpha=2$ has appeared only recently~\cite{ChodoshGianocca}: there is a physical, explicit threshold $K^*$ such that balls are the unique minimizer if $K\leq K^*$, while no minimizer exists for $K>K^*$.
We also point out the recent papers~\cite{DelPino,DelPinoFrankMusso} that deal with sets which are critical points of the Gamow energy for any value of $K$ (in the physical case $N=3$ and $\alpha=2$).

\subsection{Scope of the paper and main results}
The aim of this note is to shed some light on the behavior of the problem in the regime $K \gg 1$, with particular focus on the nonexistence issue; since as said above the nonexistence is known only for $\alpha \in [N-2,N)$, the unresolved range motivating the paper is $\alpha\in (0,N-2)$. 

In order to state our first result, we consider for a moment the ``full conjecture'', that is, that a generalized minimizer must be a union of equal balls with infinite mutual distances. Assuming that this conjecture is true, it is immediate to calculate the corresponding radius of these balls; namely, up to a multiplicative constant, the radius of the optimal balls for $\F_{K,\alpha}$ must be of order
\begin{equation}\label{eq:rk}
r_{K,\alpha}=K^{-\frac{1}{1+\alpha}}\,,
\end{equation}
see the discussion in Section~\ref{sec:prelim}. Note that the volume constraint then requires the number of balls to be of order $r_{K,\alpha}^{-N}$.
It is easy to check that, on this union of balls, there is \emph{equipartition of the energy}, namely,\[
P(\cup_i B_{r_{K,\alpha}}(x_i))\sim KV_\alpha(\cup_i B_{r_{K,\alpha}}(x_i))\sim K^{\frac{1}{1+\alpha}}.
\]
This may seem to be a peculiar property, but in fact it holds for all minimizers of $\F_{K,\alpha}$, as it was proved in~\cite{KM2} with interpolation inequalities, but may be shown also with elementary geometric arguments. Precisely, if $\Omega_K$ is an optimal set for~\eqref{eq:pbmin}, then
\begin{equation}\label{eq:equipartition}
P(\Omega_K)\sim KV_\alpha(\Omega_K)\sim K^{\frac{1}{1+\alpha}},
\end{equation}
up to constants depending only on $N,\alpha$.
This fact suggests that the quantity $r_{K,\alpha}$ may have a special role in the problem.

In fact, our first result says that if a minimizer exists, its ``thickness'' must indeed be of the same order as $r_{K,\alpha}$. More precisely, we have the following.

\begin{theorem}[Optimal thickness]\label{thm:thickness}
Let $N\geq 2$. There exists a constant $c=c(N,\alpha)$ such that for every $K\geq 1$, if $\Omega_K$ is an optimal set for~\eqref{eq:pbmin}, then the following facts hold.
\begin{itemize}
\item \emph{$\Omega_K$ is not too thick: }
$\Omega_K$ does not contain any ball of radius $cr_{K,\alpha}$.
\item \emph{$\Omega_K$ is not too thin: } 
For every ball $B_r(x)$ with radius $r<c^{-1}r_{K,\alpha}$, we have
\begin{equation}\label{eq:nosmallcuts}
\H^{N-1}\big(\Omega_K\cap \partial B_r(x)\big) \geq \frac 12 \, \H^{N-1} \big(\partial^*\Omega_K\cap B_r(x)\big)\,.
\end{equation}
\end{itemize}
\end{theorem}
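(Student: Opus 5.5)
Both parts will be proved by comparison with competitors in which a piece of $\Omega_K$ is cut out and moved far away, followed by a dilation to restore the volume. We use the equipartition~\eqref{eq:equipartition}, $P(\Omega_K)\sim KV_\alpha(\Omega_K)\sim K^{\frac1{1+\alpha}}$, and the scaling rules $P(\lambda E)=\lambda^{N-1}P(E)$ and $V_\alpha(\lambda E)=\lambda^{N+\alpha}V_\alpha(E)$. Write $r:=r_{K,\alpha}$ and $\Omega:=\Omega_K$.

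\emph{Not too thick.} Suppose $B_\rho(x)\subset\Omega$ with $\rho=cr$ and $c$ large. The competitor is $\Omega'=(\Omega\setminus B_\rho(x))\cup (B_\rho(x)+Le)$ with $L\to\infty$. Its volume is unchanged, so no dilation is needed. Since $B_\rho(x)\subset\Omega$, the perimeter grows by at most $2P(B_\rho)\approx 2N\omega_N\rho^{N-1}$. The Riesz energy drops by the interaction $2V_\alpha(B_\rho,\Omega\setminus B_\rho)$ in the limit $L\to\infty$. This interaction alone is not obviously large, because $\Omega$ might be sparse around the ball.

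The better move is to split the ball itself. We replace $B_\rho(x)$ by $M\approx(\rho/r)^N$ balls of radius $\sim r$, all sent to infinity and to infinite mutual distance, with the same total volume. The perimeter cost is at most $M P(B_r)\sim \rho^N r^{-1}$. The energy gain is at least $K\big(V_\alpha(B_\rho)-M V_\alpha(B_{r'})\big)\gtrsim K\rho^{N+\alpha}$ for $c$ large, since $MV_\alpha(B_{r'})\sim \rho^N r^\alpha\ll\rho^{N+\alpha}$. Using $Kr^{1+\alpha}=1$, we have $K\rho^{N+\alpha}=c^{N+\alpha}r^{N-1}$, while the perimeter cost is of order $c^N r^{N-1}$. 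Since $c^{N+\alpha}\gg c^N$ for $c$ large, the competitor has strictly smaller energy, a contradiction.

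\emph{Not too thin.} Suppose~\eqref{eq:nosmallcuts} fails for some $B_\rho(x)$ with $\rho<c^{-1}r$. Set $E=\Omega\cap B_\rho(x)$, $p=\H^{N-1}(\partial^*\Omega\cap B_\rho(x))$ and $s=\H^{N-1}(\Omega\cap\partial B_\rho(x))<p/2$. We remove $E$ and dilate $\Omega\setminus E$ by $\lambda=(1+|E|/(|\Omega|-|E|))^{1/N}$.
\begin{itemize}
\item The perimeter before dilation changes by $s-p\le -p/2$.
\item The Riesz energy before dilation does not increase, since we only remove mass.
\item The dilation multiplies the perimeter by $\lambda^{N-1}$ and the Riesz energy by $\lambda^{N+\alpha}$, so the added energy is $O(|E|)\,(P+KV_\alpha)(\Omega)\lesssim |E|\,K^{\frac1{1+\alpha}}=|E|/r$.
\end{itemize}
By the relative isoperimetric inequality in the ball, $|E|\le C\rho\,(p+s)$; more simply $|E|^{\frac{N-1}N}\lesssim p+s\lesssim p$. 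We use the isoperimetric form in the first case below.
\begin{itemize}
\item Either $|E|\le|B_\rho|/2$, so that $|E|^{(N-1)/N}\le Cp$. Then $|E|\le Cp\,|E|^{1/N}\le C p\rho$.
\item Or $|E|>|B_\rho|/2$, and we use $|E|\le C p\rho$ via the relative isoperimetric inequality applied to the complement. This requires controlling $|B_\rho\setminus\Omega|$. If $\Omega$ nearly fills the ball, we would instead shrink to $B_{\rho'}$, or invoke the previous step together with $\rho\ll r$.
\end{itemize}
In either case $|E|/r\le Cp\rho/r\le Cp/c<p/2$ for $c$ large, which is again a contradiction.

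I expect the main obstacle to be the case where $\Omega$ fills most of $B_\rho(x)$ while $p$ is tiny. In that case a coarea or averaging argument over radii $\rho'\in(\rho/2,\rho)$ is the natural fix: it finds a radius where $s$ is controlled, or it shows $|E|\lesssim\rho p$ directly. Alternatively, the statement may be read as giving the bound only when $p>0$ with $|E|$ comparable to $p\rho$, and the first part excludes $\Omega$ filling a ball only at scale $cr$, not at scale $\rho$. I would handle this case by choosing a good radius via coarea.
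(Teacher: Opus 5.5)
Your argument is essentially correct. The difficulty you single out in the second part is not real, however, and the fixes you suggest for it should be discarded.

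\textbf{The second part.} The inequality you wrote ``more simply'' is the global isoperimetric inequality applied to $E=\Omega\cap B_\rho(x)$, whose perimeter is at most $p+s<\tfrac32 p$. Combined with the trivial bound $|E|^{1/N}\le\omega_N^{1/N}\rho$, it gives $|E|=|E|^{1/N}|E|^{\frac{N-1}N}\le Cp\rho$ with no condition on $|E|/|B_\rho|$. Your first bullet never actually uses $|E|\le|B_\rho|/2$, so the case distinction, the relative isoperimetric inequality and the passage to the complement are all unnecessary. In particular, the scenario ``$\Omega$ fills most of $B_\rho(x)$ while $p$ is tiny'' cannot occur when \eqref{eq:nosmallcuts} fails. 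If $|E|>|B_\rho|/2$, isoperimetry gives $p+s\gtrsim\rho^{N-1}$, hence $p\gtrsim\rho^{N-1}$ because $s<p/2$.

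With the case split removed, your second part is exactly the paper's proof: remove $\Omega_K\cap B_r$, dilate, and use $P(\Omega_K\cap B_r)\ge N\omega_N^{1/N}v^{\frac{N-1}N}$ together with $\F_{K,\alpha}(\Omega_K)\lesssim r_{K,\alpha}^{-1}$. The coarea/good-radius fix you propose would not prove the statement. The claim must hold for \emph{every} ball of radius below $c^{-1}r_{K,\alpha}$, so you are not free to change the radius, nor to reinterpret the claim.

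\textbf{The first part.} Here you use a different competitor from the paper, and it works.
\begin{itemize}
\item The paper deletes $B_{cr_{K,\alpha}}$ and dilates $\Omega_K\setminus B_{cr_{K,\alpha}}$ back to volume $\omega_N$. The gain is $KV_\alpha(B_{cr_{K,\alpha}})\sim c^{N+\alpha}r_{K,\alpha}^{N-1}$. The costs are $P(B_{cr_{K,\alpha}})\sim c^{N-1}r_{K,\alpha}^{N-1}$ plus the dilation cost $\sim(cr_{K,\alpha})^N\F_{K,\alpha}(\Omega_K)\lesssim c^Nr_{K,\alpha}^{N-1}$. This needs the energy upper bound, and a preliminary reduction to $cr_{K,\alpha}$ small so the dilation factor can be linearized.
\item In your construction, the splitting cost $MP(B_{r'})\sim c^Nr_{K,\alpha}^{N-1}$ plays exactly the role of the dilation cost. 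In both cases one pays for a volume $\sim\rho^N$ at the optimal energy density $r_{K,\alpha}^{-1}$. So the decisive comparison, $c^{N+\alpha}$ against $c^N$, is the same.
\end{itemize}
Your version has the advantage of using neither a bound on $\F_{K,\alpha}(\Omega_K)$ nor smallness of $cr_{K,\alpha}$. The price is a limiting argument, whereas the paper's competitor is a single explicit set. You must check that the cross interactions between the far balls and $\Omega\setminus B_\rho(x)$ vanish as $L\to\infty$. You must also check that the volume is exactly preserved: this is immediate if $\Omega_K$ is bounded, and otherwise is restored by a dilation with factor tending to $1$.

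Finally, your perimeter cost should also include the term $P(B_\rho)$ created by removing the ball. It is of order $c^{N-1}r_{K,\alpha}^{N-1}$, hence harmless.
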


The above claim deserves a quick comment: while the fact that $\Omega_K$ does not contain any ball of radius $c r_{K,\alpha}$ clearly says in a sense that ``$\Omega_K$ is not too thick'', it might be at first glance less clear why~(\ref{eq:nosmallcuts}) should mean that ``$\Omega_K$ is not too thin''. But in fact, that estimate is precisely saying that the ``size'' of $\Omega_K$ must be at least of order $r_{K,\alpha}$; indeed, (\ref{eq:nosmallcuts}) prevents $\Omega_K$ from having connected components compactly contained in a ball of radius $r_{K,\alpha}/c$, and more in general $\Omega_K$ cannot have tentacles with radius much smaller than $r_{K,\alpha}$ or similar structures. It is to be underlined that the constant $1/2$ in~(\ref{eq:nosmallcuts}) can be replaced by any other constant smaller than $1$. In light of the thickness result of Theorem~\ref{thm:thickness}, one can qualitatively guess the shape of possible minimizers: this is compatible, for instance, with cylinder-like or slab-like structures with thickness of order $r_{K,\alpha}$.\par

Our next aim is to investigate a simplified setting in which the mechanism leading to fragmentation can be analyzed explicitly. To this end, we restrict the admissible configurations to finite unions of congruent cubes on a lattice; although this model does not resolve the full problem, it allows us to isolate and quantify the competition between the perimeter gain produced by contacts and the repulsive interaction between the components.

In analogy with the conjectured picture of generalized minimizers as collections of components whose mutual distances diverge, we first consider isolated cubes and determine the side length which minimizes the energy per unit volume. Namely, setting
\begin{equation}\label{eq:gKalpha}
g_{K,\alpha}(r)
:=\frac{\F_{K,\alpha}(rQ)}{|rQ|}
=\frac{2N}{r}+K V_\alpha(Q)r^\alpha,
\end{equation}
we find the optimal side length
\begin{equation}\label{eq:rho}
\rho_{K,\alpha}
:=\left(\frac{2N}{\alpha K V_\alpha(Q)}\right)^{\frac{1}{1+\alpha}},
\end{equation}
where $Q=(0,1)^N$. Notice that
\[
\rho_{K,\alpha}\sim r_{K,\alpha}\sim K^{-\frac{1}{1+\alpha}},
\]
as suggested by the scaling of the original problem.

If one were to prescribe exactly the side length $\rho_{K,\alpha}$, however, the volume constraint $|E|=\omega_N$ would in general not be compatible with an integer number of congruent cubes. This does not affect the argument below, since the geometric nonattainment mechanism applies more generally whenever the common side length $\ell$ satisfies
\[
\ell\geq \rho_{K,\alpha}.
\]
We therefore change the point of view while keeping the total volume fixed. We fix an integer $M\geq2$ and consider unions of exactly $M$ congruent axis-parallel cubes. 
Precisely, we set
\begin{equation}\label{eq:cubical-class}
  \A_M:=
  \left\{\bigcup_{i=1}^M\ell_M(Q+a_i):
    a_1,\ldots,a_M\in\Z^N\ \text{distinct}\right\}.
\end{equation}
The condition $|E|=\omega_N$ then determines their side length uniquely as
\begin{equation}\label{eq:ell-fixed}
\ell_M:=\left(\frac{\omega_N}{M}\right)^{1/N}.
\end{equation}
For this fixed side length, the condition $\ell_M\geq\rho_{K,\alpha}$ can equivalently be written as a lower bound on $K$. Indeed,
\[
\ell_M\geq\rho_{K,\alpha}\qquad\text{if and only if}\qquad \ell_M^{1+\alpha}
\geq
\frac{2N}{\alpha K V_\alpha(Q)},
\]
which is equivalent to
\[
K\geq
\frac{2N}{\alpha V_\alpha(Q)}
\ell_M^{-(1+\alpha)},\qquad\text{that is,}\qquad 
K\geq
\frac{2N}{\alpha V_\alpha(Q)}
\left(\frac{M}{\omega_N}\right)^{\frac{1+\alpha}{N}}.
\]
This motivates the definition
\begin{equation}\label{eq:K0}
K_0(N,\alpha,M)
:=
\frac{2N}{\alpha V_\alpha(Q)}
\left(\frac{M}{\omega_N}\right)^{\frac{1+\alpha}{N}},
\end{equation}
so that the condition $\ell_M\geq\rho_{K,\alpha}$ is exactly equivalent to
$K\geq K_0(N,\alpha,M).$

We prove the following nonexistence result for the Gamow problem in the cubic setting in the large-$K$ regime. 

\begin{theorem}[Cubical nonexistence]\label{thm:nonexcubes}
  Let $N\geq2$, $\alpha\in (0,N)$, and let $M\geq2$ be an integer.
  Set $\ell=\ell_M$ as in \eqref{eq:ell-fixed}. Then for every
  \begin{equation}\label{eq:large-K-assumption}
    K\geq K_0(N,\alpha,M),
  \end{equation}
  one has
  \begin{equation}\label{eq:infimum}
    \inf_{E\in\A_M}\F_{K,\alpha}(E)
   =M\bigl(2N\ell^{N-1}
      +K\ell^{N+\alpha}V_\alpha(Q)\bigr),
  \end{equation}
  and the infimum is not attained. It is approached by configurations in
  which all mutual distances between the $M$ cubes tend to infinity.
\end{theorem}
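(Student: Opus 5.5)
The plan is to reduce everything to one inequality between two Riesz interaction constants of the unit cube. Fix $E=\bigcup_{i=1}^M Q_i\in\A_M$ with $Q_i=\ell(Q+a_i)$, and write $\Lambda:=M\bigl(2N\ell^{N-1}+K\ell^{N+\alpha}V_\alpha(Q)\bigr)$ for the right-hand side of \eqref{eq:infimum}.

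\textbf{Step 1: exact splitting of the energy.} Let $A(E)$ be the number of unordered pairs $\{i,j\}$ with $a_i-a_j=\pm e_k$ for some $k$, i.e.\ cubes sharing a full $(N-1)$-face. Only such pairs lose perimeter, since lower-dimensional contacts carry zero $\H^{N-1}$-measure. Hence
\[
P(E)=2NM\ell^{N-1}-2\ell^{N-1}A(E).
\]
By bilinearity and scaling,
\[
V_\alpha(E)=M\ell^{N+\alpha}V_\alpha(Q)+\ell^{N+\alpha}\sum_{i\neq j}V_\alpha(Q,Q+a_j-a_i).
\]
Therefore
\[
\F_{K,\alpha}(E)-\Lambda=-2\ell^{N-1}A(E)+K\ell^{N+\alpha}\sum_{i\neq j}V_\alpha(Q,Q+a_j-a_i).
\]
Every term in the sum is strictly positive. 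I split the sum into the $2A(E)$ ordered face-adjacent pairs, each equal to $V_\alpha(Q,Q+e_1)$ by symmetry, and the remaining pairs.

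\textbf{Step 2: reduction to a cube inequality.} Dropping the non-adjacent pairs gives
\[
\F_{K,\alpha}(E)-\Lambda\ \ge\ 2\ell^{N-1}A(E)\bigl(K\ell^{1+\alpha}V_\alpha(Q,Q+e_1)-1\bigr).
\]
The hypothesis \eqref{eq:large-K-assumption} is equivalent to $K\ell^{1+\alpha}\ge 2N/(\alpha V_\alpha(Q))$. So it suffices to prove
\begin{equation*}
2N\,V_\alpha(Q,Q+e_1)\ \ge\ \alpha\,V_\alpha(Q)\qquad\text{for all }N\ge2,\ \alpha\in(0,N).
\end{equation*}
Given this, the difference is $\ge0$ when $A(E)>0$. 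When $A(E)=0$, it equals the strictly positive interaction sum, since $M\ge2$. In the case $A(E)>0$, strictness also comes from the non-adjacent pairs, or, if all pairs are adjacent (which forces $M=2$), from a strict version of the inequality. So $\F_{K,\alpha}(E)>\Lambda$ for every $E\in\A_M$.

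Conversely, take $a_i=R\,i\,e_1$ with $R\to\infty$. Then $A(E)=0$ and every cross term $V_\alpha(Q,Q+a_j-a_i)\le |a_j-a_i|^{-(N-\alpha)}$ up to a constant, which tends to $0$. Hence $\F_{K,\alpha}(E)\to\Lambda$. This proves \eqref{eq:infimum}, shows the infimum is not attained, and identifies the approximating configurations as those with all mutual distances diverging.

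\textbf{Step 3: the key inequality (main obstacle).} I expect this to be the main obstacle. First I would try crude bounds.
\begin{itemize}
\item From below: every pair $x\in Q$, $y\in Q+e_1$ satisfies $|x-y|\le\sqrt{N+3}$, so $V_\alpha(Q,Q+e_1)\ge (N+3)^{-(N-\alpha)/2}$.
\item From above: by Riesz rearrangement $V_\alpha(Q)\le V_\alpha(B)$, where $B$ is the ball of unit volume. The quantity $V_\alpha(B)$ has an explicit closed form through Beta functions.
\end{itemize}
The worry is $\alpha\to0$, where $V_\alpha(Q)$ blows up. However, $\alpha V_\alpha(B)$ stays bounded, of order $N\omega_N\cdot|B|$ divided by appropriate factors, since the singular part $\int_{B_\rho}|z|^{\alpha-N}dz=N\omega_N\rho^\alpha/\alpha$. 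So the comparison is plausible in low dimension, but may fail for large $N$ or for intermediate $\alpha$. In that case I would sharpen the estimate. One option is to write $V_\alpha(Q)$ and $V_\alpha(Q,Q+e_1)$ through the autocorrelation $\int |Q\cap(Q+z)|\,|z|^{\alpha-N}dz$ and compare the two integrals directly. The other fallback, as the acknowledgments suggest, is a rigorous numerical evaluation of the two singular integrals on a grid of $(N,\alpha)$, combined with monotonicity or Lipschitz bounds in $\alpha$ to cover the continuum, and asymptotic bounds of the above type for large $N$.
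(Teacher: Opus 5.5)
Your Steps 1 and 2 are correct and are exactly the paper's reduction: Lemma~\ref{lem:decomposition}, the sign of $K\ell^{1+\alpha}V_\alpha(Q,Q+e_1)-1$, and the separating sequence of Remark~\ref{rmk:infimum} (take $R\in\N$ so that $a_i\in\Z^N$). The genuine gap is Step 3. The inequality $2N\,V_\alpha(Q,Q+e_1)\ge\alpha V_\alpha(Q)$, in strict form, is the whole analytic content of the theorem, and you leave it unproved. You need the strict form at $K=K_0$ when $M=2$ and the two cubes share a face. Your crude bounds cannot give it, even for $N=2$. As $\alpha\downarrow0$ your lower bound $2N(N+3)^{-(N-\alpha)/2}$ tends to $4/5$, while $\alpha V_\alpha(B)\to N\omega_N=2\pi$. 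At $\alpha=1$ you get $4/\sqrt5\approx1.79$ against $\alpha V_1(B)=16/(3\sqrt\pi)\approx3.0$. The pointwise bound $|x-y|\le\sqrt{N+3}$ discards precisely the near-singular contribution across the shared face, and that is the term that must compete with the singular part of $V_\alpha(Q)$. The numerical fallback would need certified evaluations for every $N\ge2$, plus large-$N$ asymptotics that you do not supply. As it stands it is a program, not a proof.

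The missing idea is an exact identity obtained by integrating by parts against the field $X(z)=z|z|^{\alpha-N}$, whose divergence is $\alpha|z|^{\alpha-N}$. Set $h(t)=(1-|t|)_+$ and $H(z)=\prod_i h(z_i)=|Q\cap(Q-z)|$, so that $V_\alpha(Q)=\int H|z|^{\alpha-N}\,dz$. The boundary term at the origin vanishes because $\alpha>0$, and $H$ vanishes outside $(-1,1)^N$. The product structure gives $-z_i\partial_iH=|z_i|\mathbf 1_{\{|z_i|<1\}}\prod_{j\ne i}h(z_j)$, so by symmetry
\[
\alpha V_\alpha(Q)=-\int_{\R^N}X\cdot\nabla H\,dz=2N\int_{(-1,1)^{N-1}}\prod_{j=2}^N h(z_j)\int_0^1\frac{s}{(s^2+|z'|^2)^{(N-\alpha)/2}}\,ds\,dz'.
\]
Computing $V_\alpha(Q,Q+e_1)$ in the same variables gives the same expression without the factor $2N$. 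The only change is that the weight $s\mathbf 1_{(0,1)}(s)$ is replaced by the overlap profile $a(s)$, equal to $s$ on $(0,1)$ and to $2-s$ on $[1,2)$. Hence $2N\,V_\alpha(Q,Q+e_1)-\alpha V_\alpha(Q)=2N R_{N,\alpha}$, where $R_{N,\alpha}$ is the contribution of $s\in(1,2)$ and is manifestly positive. This is Proposition~\ref{prop:identity}, and it gives the strict inequality for all $N\ge2$ and $\alpha\in(0,N)$ at once. Your suggestion to compare the autocorrelation integrals directly points in this direction. What it lacks is the divergence identity that turns the kernel $\alpha|z|^{\alpha-N}$ into a first-order weight; without it nothing makes the two integrals comparable.
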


The main analytic ingredient is an exact comparison between the
self-interaction of a cube and the interaction of two cubes sharing a face.
Writing $e_1=(1,0,\ldots,0)$, we show in
Proposition~\ref{prop:identity} that
\begin{equation}\label{eq:identity-intro}
  V_\alpha(Q,Q+e_1)=\frac{\alpha}{2N}V_\alpha(Q)+R_{N,\alpha},
  \qquad R_{N,\alpha}>0.
\end{equation}
The proof uses the product structure of $Q$ and integration by parts with
the vector field $z|z|^{\alpha-N}$. 

At the fixed side length $\ell=\ell_M$, the definition of $K_0$ gives
\begin{equation}\label{eq:threshold-intro}
  K_0\ell^{1+\alpha}V_\alpha(Q)=\frac{2N}{\alpha}.
\end{equation}
Combining \eqref{eq:threshold-intro} with \eqref{eq:identity-intro} yields
\begin{equation}\label{eq:strict-contact-intro}
  K\ell^{1+\alpha}V_\alpha(Q,Q+e_1)
  \geq \frac{2N V_\alpha(Q,Q+e_1)}{\alpha V_\alpha(Q)}>1
  \qquad\text{for every }K\geq K_0.
\end{equation}
A shared face reduces the perimeter by $2\ell^{N-1}$, whereas the mutual
interaction of the two touching cubes contributes
$2K\ell^{N+\alpha}V_\alpha(Q,Q+e_1)$. Hence every face contact is strictly
unfavourable for $K\geq K_0$. All other pairs give a strictly positive
interaction and no perimeter reduction. This pairwise comparison is the
mechanism behind Theorem~\ref{thm:nonexcubes}.

The plan of the paper is very simple. First, in Section~\ref{sec:prelim} we fix the notation and collect some simple estimates. Then, Section~\ref{sec:main} is devoted to prove Theorem~\ref{thm:thickness}. Finally, in Section~\ref{sec:cubes} we show Theorem~\ref{thm:nonexcubes}.

\section{Notations and preliminaries\label{sec:prelim}} 

This section is devoted to fix the notation and to present some preliminary observations and estimates. Through all the paper we will denote by $N\geq 2$ the dimension of the ambient space, and by $\alpha$ a given constant in $(0,N)$ that we use to define the Riesz energy as in~(\ref{Rieszenergy}); for any $K\geq 1$, we consider the energy $\F_{K,\alpha}$ as in~(\ref{eq:pbmin}). The elements of the canonical basis of $\R^N$ will be denoted as $e_i$ for $1\leq i\leq N$. Given any set $E\subseteq\R^N$, we will denote by $|E|=\H^N(E)$ its volume and by $P(E)=\H^{N-1}(\partial^* E)$ its perimeter (in the De Giorgi sense, see for instance~\cite{AFP}); for any open set $A\subseteq\R^N$, the relative perimeter of $E$ in $A$ will be denoted as $P(E;A) = \H^{N-1}(\partial^* E\cap A)$.

For any $r>0$ and any $x\in\R^N$, the ball with radius $r$ and center $x$ will be denoted as $B_r(x)$, or just $B_r$ if the center is the origin, or $B$ if the ball has unit volume. By rescaling, for every $\lambda>0$ and every $E,F\subset\R^N$ we have
\begin{align}\label{rescaling}
P(\lambda E) = \lambda^{N-1} P(E)\,, && V_\alpha(\lambda E,\lambda F) = \lambda^{N+\alpha} V_\alpha(E,F)\,.
\end{align}

As described in the Introduction, it is conjectured that for $K$ large the problem~(\ref{eq:pbmin}) has no solution, and that a minimizing sequence is given by equal balls of some optimal radius having exploding mutual distances; equivalently, we can say that a ``generalized solution'' is given by equal balls at infinite distance (for a formal description of these generalized solutions see for instance~\cite{NovagaPratelli}). Let us then restrict our attention, for a moment, to a set $\Omega$ made by $H$ equal balls, having very large mutual distances. Then, neglecting the mutual interaction between different balls (which is arbitrarily small up to take the balls distant enough) we have
\[\begin{split}
\F_{K,\alpha}(\Omega) &= P(\Omega) + KV_\alpha(\Omega)
= H \Big(P(B_{H^{-1/N}}) + K V_\alpha(B_{H^{-1/N}})\Big)
= H \Big( N\omega_N H^{-\frac{N-1}N} + K H^{-\frac{N+\alpha}N}V_\alpha(B)\Big)\\
&= N\omega_N H^{1/N} + K H^{-\frac \alpha N} V_\alpha(B)\,.
\end{split}\]
As a consequence, among the union of equal balls, the best one has a number of balls which is $H \sim K^{\frac N{1+\alpha}}$; the corresponding ``best radius'' is then of order $K^{-\frac 1{1+\alpha}}$, so it corresponds (up to multiplicative constants) to the radius $r_{K,\alpha}$ defined in~(\ref{eq:rk}). 

Now, we can show that minimizers of $\F_{K,\alpha}$ for $K$ large, if they exist, must have very large diameter. The result is the following.
\begin{lemma}\label{lemmadiam}
Let $K\geq 1$ and let $\Omega_K$ be such that $\F_{K,\alpha}(\Omega_K)< \zeta\inf$\eqref{eq:pbmin}, being $\zeta>1$ given. Then, for any unit cube $Q$ we have
\begin{equation}\label{firstone}
\big| \Omega_K \cap Q\big| \lesssim K^{-\frac \alpha{2(1+\alpha)}} \,.
\end{equation}
As a consequence,
\begin{equation}\label{secondone}
\diam(\Omega_K)\gtrsim K^{\frac{\alpha}{(1+\alpha)(N-\alpha)}}\,.
\end{equation}
\end{lemma}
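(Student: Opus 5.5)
Both estimates follow from two facts: an upper bound on the infimum coming from the ball competitors, and the trivial lower bound $V_\alpha(\Omega_K\cap Q)\le V_\alpha(\Omega_K)$ (the integrand is positive) combined with a lower bound for the Riesz energy of a set confined to a unit cube.

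\textbf{Step 1: upper bound on the infimum.} By the computation above with $H$ equal balls at diverging mutual distances, choosing $H\sim K^{N/(1+\alpha)}$ gives
\[
\inf\eqref{eq:pbmin}\lesssim K^{\frac1{1+\alpha}}.
\]
For $K$ near $1$ one takes $H=1$; this gives a bound of order one, still compatible with the same estimate since $K\ge1$. Hence $\F_{K,\alpha}(\Omega_K)\lesssim \zeta K^{1/(1+\alpha)}$, and in particular
\[
V_\alpha(\Omega_K)\lesssim K^{\frac1{1+\alpha}-1}=K^{-\frac{\alpha}{1+\alpha}}.
\]

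\textbf{Step 2: proof of \eqref{firstone}.} Let $E=\Omega_K\cap Q$. Any two points of $Q$ are at distance at most $\sqrt N$, so the kernel satisfies $|x-y|^{\alpha-N}\ge N^{(\alpha-N)/2}$ on $Q\times Q$. Therefore
\[
V_\alpha(E)\ge N^{-\frac{N-\alpha}{2}}|E|^2 .
\]
Combining this with $V_\alpha(E)\le V_\alpha(\Omega_K)\lesssim K^{-\alpha/(1+\alpha)}$ yields $|E|^2\lesssim K^{-\alpha/(1+\alpha)}$, which is exactly \eqref{firstone}.

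\textbf{Step 3: proof of \eqref{secondone}.} Put $D=\diam\Omega_K$. If $D$ is large, $\Omega_K$ lies in a ball of radius $D$ and the kernel is at least $(2D)^{\alpha-N}$ on $\Omega_K\times\Omega_K$, so
\[
K^{-\frac{\alpha}{1+\alpha}}\gtrsim V_\alpha(\Omega_K)\ge (2D)^{\alpha-N}\omega_N^2 .
\]
This gives $D^{N-\alpha}\gtrsim K^{\alpha/(1+\alpha)}$, which is \eqref{secondone}. If $D\lesssim1$ the inequality is also direct, since then $V_\alpha(\Omega_K)\gtrsim1$, which contradicts Step 1 for $K$ large; the finitely many remaining values of $K$ are absorbed in the constant. (One could instead deduce a diameter bound from \eqref{firstone} by counting the unit cubes needed to carry mass $\omega_N$, but the direct bound above already gives the exponent stated in \eqref{secondone}.)

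The only step that needs care is Step 1: it must be made uniform for $K\ge1$, and one must check that the mutual interactions between the distant balls can be made negligible. Both points are routine.
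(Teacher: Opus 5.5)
Your proposal is correct and follows the paper's own argument. The paper also bounds $V_\alpha(\Omega_K)\lesssim K^{-\alpha/(1+\alpha)}$ using ball competitors, then bounds the kernel from below on a unit cube to get \eqref{firstone}, and on a region of size $\diam\Omega_K$ containing $\Omega_K$ to get \eqref{secondone}. The case split in Step 3 is unnecessary, since $|x-y|^{\alpha-N}\ge D^{\alpha-N}$ on $\Omega_K\times\Omega_K$ holds for every $D$; this also avoids the slip about "finitely many remaining values of $K$", which actually form a bounded interval.
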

\begin{proof}
We have already noticed that a union of balls of radius of order $K^{\frac1 {1+\alpha}}$ and very far from each other have energy of order $K^{\frac 1{1+\alpha}}$, and being $\Omega_K$ optimal (up to a factor $\zeta$) it must clearly be $KV_\alpha(\Omega_K) \leq \F_{K,\alpha}(\Omega_K)\lesssim K^{\frac 1{1+\alpha}}$, which means $V_\alpha(\Omega_K) \lesssim K^{-\frac \alpha{1+\alpha}}$. We have then
\[
K^{-\frac \alpha{1+\alpha}} \gtrsim V_\alpha(\Omega_K) \geq \int_{\Omega_K\cap Q}\int_{\Omega_K\cap Q} \frac 1{|y-x|^{N-\alpha}}\,dx\,dy\gtrsim  \big|\Omega_K\cap Q\big|^2\,,
\]
and this is~(\ref{firstone}). The same argument, substituting the unit cube $Q$ with a cube $\widetilde Q$ of diameter $d$, gives
\[
K^{-\frac \alpha{1+\alpha}} \gtrsim \big|\Omega_K\cap \widetilde Q\big|^2 \, d^{\alpha-N}\,,
\]
and this clearly implies~(\ref{secondone}).
\end{proof}

\section{Proof of Theorem~\ref{thm:thickness}}\label{sec:main}

This section is devoted to the proof of Theorems~\ref{thm:thickness}.

\begin{proof}[Proof (of Theorem~\ref{thm:thickness})]
We start with the first part, namely, that if $c$ is large enough then for every $K\geq 1$ an optimal set $\Omega_K$ cannot contain a ball with radius $c r_{K,\alpha}$. Notice that, up to increase the value of $c$, we are allowed to assume that $c r_{K,\alpha}$ is quite small. More precisely suppose that we have found $c_0$ such that for every $K\geq 1$ and every optimal set $\Omega_K$ we have that $\Omega_K$ cannot contain a ball of radius $c_0 r_{K,\alpha}$ provided that $c_0 r_{K,\alpha}\leq \ell$ for some given small $\ell\leq 1$. But then, the theorem is true with $c=c_0/\ell$; in fact, if $c_0 r_{K,\alpha}\leq \ell$ then $\Omega_K$ cannot contain a ball of radius $c_0 r_{K,\alpha}$, and then it cannot contain a ball or radius $c r_{K,\alpha}$. Instead, if $c_0 r_{K,\alpha}>\ell$, then $c r_{K,\alpha}>1$ and then for sure $\Omega_K$ cannot contain a ball of radius $c r_{K,\alpha}$, simply because such a ball has voume strictly larger than $\omega_N=|\Omega_K|$.\par

Now, suppose by contradiction that for some $K\geq 1$ an optimal set $\Omega_K$ contains the ball $B_{c r_{K,\alpha}}$; then, we define the competitor
\[
\widetilde\Omega = \big(1- (c r_{K,\alpha})^N\big)^{-\frac 1N} \Big(\Omega_K\setminus B_{c r_{K,\alpha}}\Big)\,,
\]
which satisfies $|\widetilde\Omega|=\omega_N$. Since $c r_{K,\alpha}$ is small, we have
\[\begin{split}
P(\widetilde\Omega) &= \big(1- (c r_{K,\alpha})^N\big)^{-\frac {N-1}N} \big(P(\Omega_K)+N\omega_N (c r_{K,\alpha})^{N-1}\big)
\leq \big(1 + (c r_{K,\alpha})^N \big)\,\big(P(\Omega_K)+N\omega_N (c r_{K,\alpha})^{N-1}\big)\\
&\leq P(\Omega_K) + (cr_{K,\alpha})^N P(\Omega_K) + 2 N\omega_N (cr_{K,\alpha})^{N-1}\,,
\end{split}\]
while
\[\begin{split}
V_\alpha(\widetilde\Omega) &= \big(1- (c r_{K,\alpha})^N\big)^{-\frac {N+\alpha}N} V_\alpha\big(\Omega_K\setminus B_{c r_{K,\alpha}}\big)
\leq \big( 1 + 2 (c r_{K,\alpha})^N\big)\, \big( V_\alpha(\Omega_K)- V_\alpha(B_{c r_{K,\alpha}})\big)\\
&\leq V_\alpha(\Omega_K) +  2 (c r_{K,\alpha})^N V_\alpha(\Omega_K)- (c r_{K,\alpha})^{N+\alpha}V_\alpha(B)\,.
\end{split}\]
We get a contradiction if $\F_{K,\alpha}(\widetilde\Omega)<\F_{K,\alpha}(\Omega_K)$, and putting together the last two estimates this is true if
\[
(cr_{K,\alpha})^N P(\Omega_K) + 2 N\omega_N (cr_{K,\alpha})^{N-1} +  2 (c r_{K,\alpha})^N K V_\alpha(\Omega_K)\leq K (c r_{K,\alpha})^{N+\alpha}V_\alpha(B)\,.
\]
Since we can choose a large $c$ and the highest power of $c$ in the above inequality is on the right, and keeping in mind that $V_\alpha(B)$ is a constant, then it is enough to show that
\begin{align*}
r_{K,\alpha}^N \F_{K,\alpha}(\Omega_K) \lesssim K r_{K,\alpha}^{N+\alpha} \,, && r_{K,\alpha}^{N-1} \lesssim K r_{K,\alpha}^{N+\alpha}\,.
\end{align*}
The definition~\eqref{eq:rk} of $r_{K,\alpha}$ gives $K r_{K,\alpha}^{N+\alpha}=r_{K,\alpha}^{N-1}$, so the second estimate is emptily true, and also the first estimate is true since $\F_{K,\alpha}(\Omega_K)\lesssim r_{K,\alpha}^{-1}=K^{\frac 1{1+\alpha}}$, as already noticed.

Let us now pass to consider the second part. Let $\Omega_K$ be an optimal set for~(\ref{eq:pbmin}), and let us consider a ball with radius $r<c^{-1} r_{K,\alpha}$, that for simplicity of notations we assume to be the ball $B_r$ centered at the origin. We can define the competitor
\[
\widetilde \Omega = \lambda \big(\Omega_K\setminus B_r\big)\,,
\]
where $\lambda$ is the constant (slightly larger than $1$) such that $|\widetilde\Omega|=|\Omega_K|=\omega_N$, thus
\[
\lambda = \bigg(\frac{\omega_N}{\omega_N - v}\bigg)^{1/N}\,,
\]
where we denote for brevity $v=|\Omega_K\cap B_r|$. If~(\ref{eq:nosmallcuts}) is false, then
\[
\F_{K,\alpha}(\Omega_K\setminus B_r) < \F_{K,\alpha}(\Omega_K) +  \big(P(\Omega_K\setminus B_r) - P(\Omega_K)\big)
\leq \F_{K,\alpha}(\Omega_K) - \frac 13\, P(\Omega_K\cap B_r)\leq \F_{K,\alpha}(\Omega_K) - \frac{N\omega_N^{1/N}}3\, v^{\frac{N-1}N}\,.
\]
Keeping in mind the rescaling formulas~(\ref{rescaling}) and the fact that $\lambda\geq 1$, we get
\[\begin{split}
\F_{K,\alpha}(\widetilde\Omega) &\leq \lambda^{N+\alpha} \F_{K,\alpha}(\Omega_K\setminus B_r) \leq \bigg( 1 + \frac{4v}{\omega_N}\bigg)\, \bigg(\F_{K,\alpha}(\Omega_K) - \frac {N\omega_N^{1/N}}3\, v^{\frac{N-1}N}\bigg)\\
&\leq \F_{K,\alpha}(\Omega_K) +\frac{4v}{\omega_N} \F_{K,\alpha}(\Omega_K) -  \frac {N\omega_N^{1/N}}3\, v^{\frac{N-1}N}\,.
\end{split}\]
Recalling once again that $\F_{K,\alpha}(\Omega_K)\lesssim r_{K,\alpha}^{-1}$, this estimate provides us the required contradiction as soon as $r_{K,\alpha} \gtrsim v^{\frac 1N}$, and since by construction $v\leq \omega_N(c^{-1} r_{K,\alpha})^N$ we obtain the thesis as soon as $c$ has been chosen large enough.
\end{proof}

Notice that in the above proof the constant $c_{N,\alpha}$ can be explicitly traced. We now notice a simple consequence of Lemma~\ref{lemmadiam}, which will not be needed in the rest of the paper but it is an interesting and elementary ``cutting property''.

\begin{corollary}\label{corcut}
Let $\Omega_K$ be a set such that $\F_{K,\alpha}(\Omega_K) < \zeta \inf\eqref{eq:pbmin}$, for some given $\zeta>1$. Then, provided that $K$ is large enough, there exists a hyperplane $H$ such that, calling $H^\pm$ the two parts in which $\R^N$ is divided by $H$, one has
\begin{align}\label{goodest}
\H^{N-1}\big(\Omega_K \cap H\big)\leq 1\,, && \big| \Omega_K \cap H^\pm\big| \in \bigg( \frac {\omega_N}{3N},\, \omega_N-\frac {\omega_N}{3N}\bigg)\,.
\end{align}
\end{corollary}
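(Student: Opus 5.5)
The plan is to combine the density bound~\eqref{firstone} of Lemma~\ref{lemmadiam} with a slicing (coarea/Fubini) argument in one fixed direction, say $e_1$. Consider the function $t\mapsto f(t):=|\Omega_K\cap\{x_1<t\}|$, which is continuous and nondecreasing from $0$ to $\omega_N$. Let $t_1<t_2$ be the values with $f(t_1)=\omega_N/(3N)$ and $f(t_2)=\omega_N-\omega_N/(3N)$. Every hyperplane $H_t=\{x_1=t\}$ with $t\in(t_1,t_2)$ satisfies the volume condition in~\eqref{goodest}. By Fubini, $\int_{t_1}^{t_2}\H^{N-1}(\Omega_K\cap H_t)\,dt=f(t_2)-f(t_1)\leq\omega_N$. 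So it suffices to show that $t_2-t_1>\omega_N$: then some $t$ in the interval has slice measure at most $1$, and $H=H_t$ does the job. (If the slice could be exactly the average, I would take the strict version by a small margin; this is harmless.)

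The main step is therefore a lower bound on $t_2-t_1$, and this is where~\eqref{firstone} enters. The slab $S=\{t_1<x_1<t_2\}$ contains mass $f(t_2)-f(t_1)\geq\omega_N(1-\tfrac{2}{3N})\geq\omega_N/3$. A slab of width $w$ is covered by unit cubes, but there are infinitely many of them in the transverse directions, so~\eqref{firstone} alone does not directly bound the mass in $S$. I would therefore first localize: $\Omega_K$ may be unbounded, but its mass is finite. Hence I would replace $S$ by $S\cap C_R$, where $C_R=\{|x'|_\infty<R\}$ is a transverse box chosen so that $|\Omega_K\setminus C_R|$ is small. This introduces an $R$-dependent count of cubes and breaks uniformity.

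A better route is to use the Riesz energy bound directly, as in the proof of Lemma~\ref{lemmadiam}. We have $V_\alpha(\Omega_K)\lesssim K^{-\alpha/(1+\alpha)}\to0$. Write $F=\Omega_K\cap S$. If $w=t_2-t_1\leq\omega_N$, I would exploit the interaction kernel restricted to the slab: for $x,y\in S$, $|x-y|\leq w+|x'-y'|$. This alone gives no lower bound on $V_\alpha(F)$, since mass can spread transversally. So I would instead apply the cutting argument in each of the $N$ coordinate directions simultaneously. This explains the constant $3N$: for each $i$, if every admissible hyperplane orthogonal to $e_i$ had slice measure above $1$, the corresponding central slab $S_i$ would have width at most $\omega_N$. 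Each $S_i$ misses at most $2\omega_N/(3N)$ of the mass, so the intersection $\bigcap_i S_i$, which is a box of side at most $\omega_N$, contains mass at least $\omega_N-N\cdot\tfrac{2\omega_N}{3N}=\omega_N/3$. That box is covered by a bounded number (depending on $N$) of unit cubes, so one of them contains mass $\gtrsim1$. This contradicts~\eqref{firstone} once $K$ is large.

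So the final argument is by contradiction over all $N$ directions, with Fubini giving the width bound in each direction. The delicate point I expect is bookkeeping the constants: the strict versus non-strict inequality in the average, and checking that $1-2/3$ leaves positive mass, which it does. The requirement that $K$ be large is used only to make the right-hand side of~\eqref{firstone} smaller than the resulting universal constant.
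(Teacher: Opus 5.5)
Your final argument is correct and is essentially the paper's proof. You take quantile slabs in each of the $N$ coordinate directions and use Fubini to find a slice of measure at most $1$ whenever some slab has width greater than $\omega_N$; otherwise a box of side $\omega_N$ carries mass at least $\omega_N/3$, which contradicts~\eqref{firstone} for $K$ large. The intermediate detours (transversal localization, bounding $V_\alpha$ on a single slab) are not needed and can simply be dropped.
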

\begin{proof}
Let us define $a_1<b_1$ as
\begin{align*}
a_1 = \sup \bigg\{ t\in \R,\, \big| \Omega_K \cap \{ x_1 \leq t\} \big| \leq \frac{\omega_N}{3N}\bigg\}\,, &&
b_1 = \inf \bigg\{ t\in \R,\, \big| \Omega_K \cap \{ x_1 \geq t\} \big| \leq \frac{\omega_N}{3N}\bigg\}\,,
\end{align*}
so that the part of $\Omega_K$ which is not contained in the slab $\Pi_1 = [a_1,\, b_1] \times \R^{N-1}$ has volume $2\omega_N/3N$. If the width $b_1-a_1$ of the slab is greater than $\omega_N$, then there is some $a_1<t<b_1$ such that
\[
\H^{N-1}\big( \Omega_K \cap \{ x_1=t\}\big) \leq 1\,,
\]
and then the proof is obtained with $H= \{x_1=t\}$. If we repeat the same argument with slabs $\Pi_j$ with $2\leq j\leq N$, we conclude the proof unless each of the slabs has width smaller than $\omega_N$. In other words, if the claim is false then the intersection of the slabs is contained in a cube $Q$ of side $\omega_N$; moreover, by construction the intersection of this cube with $\Omega_K$ has volume at least $\omega_N/3$. And this is clearly in contradiction with Lemma~\ref{lemmadiam} as soon as $K$ is large enough.
\end{proof}

\section{Proof of Theorem~\ref{thm:nonexcubes}}\label{sec:cubes}

Thanks to the scaling highlighted in Section~\ref{sec:prelim}, for $r>0$, we have
\begin{equation}\label{eq:cube-energy}
  \F_{K,\alpha}(rQ)
  =2Nr^{N-1}+K r^{N+\alpha}V_\alpha(Q).
\end{equation}
For the prescribed-volume class \eqref{eq:cubical-class}, the side length
is $r=\ell (=\ell_M)$ and is therefore completely determined by $\omega_N$ and the
integer $M$.

Note also that  \eqref{eq:K0} is equivalent to
\begin{equation}\label{eq:K0-equivalent}
  K_0\ell^{1+\alpha}V_\alpha(Q)=\frac{2N}{\alpha}.
\end{equation}
Recalling the energy per unit volume of a cube of side length $r$ from~\eqref{eq:gKalpha}, $K_0$ is precisely the value for which the fixed side length
$\ell$ satisfies the stationary scaling relation for the isolated-cube
specific energy. 

\begin{remark}\label{rmk:infimum}
Let us also show that the asymptotic value of the functional among disjoint union of cubes of sidelength $\ell$ at reciprocal diverging distance is exactly $M\F_{K,\alpha}(\ell Q)$, that is, the right hand side of~\eqref{eq:infimum}.
For integers $n\geq2$, we define
  \begin{equation}\label{eq:separating-sequence}
    E_n:=\bigcup_{i=0}^{M-1}\ell(Q+in e_1).
  \end{equation}
  Every $E_n$ belongs to $\A_M$, and the cubes have no shared faces.
  For $0\leq i<j\leq M-1$, their first coordinates are separated by at
  least $\ell((j-i)n-1)$. Since $N-\alpha>0$,
  \[
    V_\alpha\bigl(\ell(Q+in e_1),\ell(Q+jn e_1)\bigr)
    \leq\frac{\ell^{N+\alpha}}{(n-1)^{N-\alpha}}.
  \]
  It follows that, as $n\to+\infty$,
  \[
    0\leq\F_{K,\alpha}(E_n)-M\F_{K,\alpha}(\ell Q)
    \leq \frac{K M(M-1)\ell^{N+\alpha}}{(n-1)^{N-\alpha}}
    \longrightarrow0.
  \]
\end{remark}

\subsection{A technical identity}\label{subsec:technical}
We develop here an exact identity for two cubes sharing a face, by a sort of calibration method.
Define
\begin{equation}\label{eq:tent}
  h(t):=(1-|t|)_+,
  \qquad H(z):=\prod_{i=1}^N h(z_i).
\end{equation}
Since $H(z)=|Q\cap(Q-z)|$, Fubini's theorem gives
\begin{equation}\label{eq:self-convolution}
  V_\alpha(Q)=\int_{(-1,1)^N}H(z)|z|^{\alpha-N}\,dz.
\end{equation}

The proof of the next result is based on the autocorrelation representation of the Riesz energy and on an integration by parts argument exploiting the homogeneity of the Riesz kernel. Related representations of Riesz energies in terms of overlap functions can be found in~\cite{Gotz}, while integration by parts arguments for Riesz-type energies in a geometric setting have been used, for example, recently in~\cite{BonaciniCristoferiTopaloglu}.

\begin{proposition}\label{prop:identity}
  Let $N\geq2$ and $\alpha\in (0,N)$. Then
  \begin{equation}\label{eq:identity}
    V_\alpha(Q,Q+e_1)=\frac{\alpha}{2N}V_\alpha(Q)+R_{N,\alpha},
  \end{equation}
  where, calling $z'=(z_2,\ldots,z_N)$,
  \begin{equation}\label{eq:remainder}
    R_{N,\alpha}:=
    \int_{(-1,1)^{N-1}}\prod_{j=2}^N h(z_j)
      \int_1^2\frac{2-s}{(s^2+|z'|^2)^{(N-\alpha)/2}}\,ds\,dz'
    >0.
  \end{equation}
\end{proposition}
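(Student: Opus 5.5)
The identity should come from writing both energies as integrals of overlap functions against the Riesz kernel, and then integrating by parts with the vector field $z|z|^{\alpha-N}$ in the representation \eqref{eq:self-convolution} of $V_\alpha(Q)$. The remainder $R_{N,\alpha}$ will then be exactly the part of $V_\alpha(Q,Q+e_1)$ with $z_1\in(1,2)$.

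\emph{Step 1: representation of the mutual energy.} Substituting $y=x+z$ and using Fubini as in \eqref{eq:self-convolution}, we get
\[
V_\alpha(Q,Q+e_1)=\int_{\R^N}\big|\{x\in Q:\ x+z\in Q+e_1\}\big|\,|z|^{\alpha-N}\,dz=\int_{\R^N}H(z-e_1)\,|z|^{\alpha-N}\,dz .
\]
Since $h(z_1-1)$ is supported in $z_1\in(0,2)$ and equals $z_1$ on $(0,1)$ and $2-z_1$ on $(1,2)$, this splits as
\[
V_\alpha(Q,Q+e_1)=\int_{(-1,1)^{N-1}}\prod_{j=2}^N h(z_j)\bigg(\int_0^1\frac{z_1}{|z|^{N-\alpha}}\,dz_1+\int_1^2\frac{2-z_1}{|z|^{N-\alpha}}\,dz_1\bigg)dz'.
\]
The second piece is exactly $R_{N,\alpha}$ as defined in \eqref{eq:remainder}. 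It is strictly positive because its integrand is positive on a set of positive measure, and it is finite because $s\geq1$ keeps the kernel bounded.

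\emph{Step 2: the first piece equals $\frac{\alpha}{2N}V_\alpha(Q)$.} We use $\mathrm{div}\big(z|z|^{\alpha-N}\big)=\alpha|z|^{\alpha-N}$ for $z\neq0$. Then
\[
\alpha V_\alpha(Q)=\int H\,\mathrm{div}\big(z|z|^{\alpha-N}\big)\,dz=-\int \nabla H\cdot z\,|z|^{\alpha-N}\,dz=-\sum_{i=1}^N\int z_i\,\partial_iH\,|z|^{\alpha-N}\,dz .
\]
By the symmetry of $H$ and of the kernel under permutation of coordinates, all $N$ terms coincide, so the sum equals $N$ times the $i=1$ term. Almost everywhere, $z_1h'(z_1)=-|z_1|$ on $(-1,1)$. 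Hence
\[
\alpha V_\alpha(Q)=N\int |z_1|\prod_{j\geq2}h(z_j)\,|z|^{\alpha-N}\,dz ,
\]
and by the evenness in $z_1$ this equals $2N\int_{z_1\in(0,1)} z_1\prod_{j\geq2}h(z_j)\,|z|^{\alpha-N}\,dz$. Dividing by $2N$ gives exactly the first piece of Step 1, which proves \eqref{eq:identity}.

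\emph{Main point to justify: the integration by parts.} The field $z|z|^{\alpha-N}$ is singular at the origin, while $H$ is Lipschitz and compactly supported in $[-1,1]^N$. I would carry out the integration by parts on $(-1,1)^N\setminus B_\varepsilon$, where the outer boundary term vanishes because $H=0$ on $\partial(-1,1)^N$. The flux through $\partial B_\varepsilon$ is bounded by $\|H\|_\infty\,\varepsilon^{1+\alpha-N}\,\H^{N-1}(\partial B_\varepsilon)\lesssim\varepsilon^\alpha$, which tends to $0$ since $\alpha>0$. Both volume integrands are locally integrable: $|z|^{\alpha-N}$ since $\alpha>0$, and $|\nabla H|\,|z|^{1+\alpha-N}$ since $\nabla H$ is bounded. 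So we can pass to the limit $\varepsilon\to0$ by dominated convergence.
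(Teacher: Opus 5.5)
Your proposal is correct and follows essentially the same route as the paper. Both use the overlap representation of the energies, integrate by parts against $z|z|^{\alpha-N}$ on a cube with a small ball removed (the paper uses $(-2,2)^N$, you use $(-1,1)^N$, which changes nothing since $H$ vanishes on both boundaries), apply coordinate symmetry and evenness in $z_1$, and identify the $z_1\in(1,2)$ part of $\int H(z-e_1)|z|^{\alpha-N}\,dz$ with $R_{N,\alpha}$; your shifted overlap function $H(z-e_1)$ is just a compact form of the paper's change of variables $s=y_1-x_1$ with density $a(s)=h(s-1)$.
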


\begin{proof}
Let $X(z):=z|z|^{\alpha-N}$ for $z\ne0$ and $H$ be as in~\eqref{eq:tent}. A direct calculation gives
  \[
    \operatorname{div}X(z)=\alpha|z|^{\alpha-N}
    \qquad\text{for $z\not=0$}.
  \]

%
%
%
\noindent For $0<\varepsilon<1$, set
$D_\varepsilon:=(-2,2)^N\setminus\overline{B_\varepsilon}$.
Since $HX$ is Lipschitz on $\overline{D_\varepsilon}$,
the divergence theorem, together with $H=0$ on $\partial(-2,2)^N$
and $\nu_{D_\varepsilon}(z)=-z/\varepsilon$ on $\partial B_\varepsilon$,
gives
\begin{align*}
	\alpha\int_{D_\varepsilon}H(z)|z|^{\alpha-N}\,dz
	+\int_{D_\varepsilon}X(z)\cdot\nabla H(z)\,dz
	&=\int_{\partial D_\varepsilon}HX\cdot\nu_{D_\varepsilon}
	\,d\mathcal H^{N-1}=-\varepsilon^{\alpha-N+1}
	\int_{\partial B_\varepsilon}H\,d\mathcal H^{N-1}.
\end{align*}
Using $0\leq H\leq1$ and $|\nabla H|\leq\sqrt N$ almost everywhere,
we estimate   the terms
\begin{align*}
	\varepsilon^{\alpha-N+1}
	\int_{\partial B_\varepsilon}H\,d\mathcal H^{N-1}
	&\leq N\omega_N\varepsilon^\alpha,\\
	\alpha\int_{B_\varepsilon}H(z)|z|^{\alpha-N}\,dz
	&\leq\alpha N\omega_N\int_0^\varepsilon r^{\alpha-1}\,dr
	=N\omega_N\varepsilon^\alpha,\qquad\text{and}\\
	\int_{B_\varepsilon}|X(z)\cdot\nabla H(z)|\,dz
	&\leq N^{3/2}\omega_N\int_0^\varepsilon r^\alpha\,dr
	=\frac{N^{3/2}\omega_N}{\alpha+1}\varepsilon^{\alpha+1}.
\end{align*}
All three terms tend to zero as $\varepsilon\downarrow0$, since
$\alpha>0$. Recalling \eqref{eq:self-convolution} and the support of
$H$, we obtain
\begin{equation}\label{eq:ibp}
	\alpha V_\alpha(Q)
	=-\int_{\R^N}X(z)\cdot\nabla H(z)\,dz.
\end{equation}
Now note that, for almost every $z\in\R^N$,
  \[
    \partial_iH(z)
    =-\operatorname{sgn}(z_i)\mathbf1_{\{|z_i|<1\}}
       \prod_{j\ne i}h(z_j).
  \]
  Substituting in \eqref{eq:ibp} gives
  \begin{equation}\label{eq:coordinate-sum}
    \alpha V_\alpha(Q)
    =\sum_{i=1}^N\int_{(-1,1)^N}
       |z_i|\prod_{j\ne i}h(z_j)|z|^{\alpha-N}\,dz.
  \end{equation}
  The $N$ summands are equal by coordinate symmetry. Using also evenness
  in $z_1$, we conclude that
  \begin{equation}\label{eq:I}
  \begin{split}
    \frac{\alpha}{2N}V_\alpha(Q)&=I_{\alpha,N},\\
    I_{\alpha,N}&:=\int_{(-1,1)^{N-1}}\prod_{j=2}^N h(z_j)
       \int_0^1\frac{s}{(s^2+|z'|^2)^{(N-\alpha)/2}}\,ds\,dz'.
  \end{split}
  \end{equation}
  These integrals are finite: near zero, the integrands in
  \eqref{eq:coordinate-sum} are bounded by $|z|^{\alpha-N+1}$.
  We now compute $V_\alpha(Q,Q+e_1)$. For $x\in Q$, $y\in Q+e_1$, set
  $z=y-x$. 
  Making the change of variables $s=y_1-x_1$, we observe that, for each fixed $s$, the admissible values of $x_1$ belong to the interval
\[
(0,1)\cap(1-s,2-s),
\]
whose length is
  \[
    a(s):=
    \begin{cases}
      s,&0<s<1,\\
      2-s,&1\leq s<2,\\
      0,&\text{otherwise}.
    \end{cases}
  \]
  The corresponding density in every other coordinate is $h$. Therefore
  \begin{align*}
    V_\alpha(Q,Q+e_1)
    &=\int_{(-1,1)^{N-1}}\prod_{j=2}^N h(z_j)
       \int_0^2\frac{a(s)}{(s^2+|z'|^2)^{(N-\alpha)/2}}\,ds\,dz'\\
    &=I_{\alpha,N}+R_{N,\alpha}.
  \end{align*}
  Together with \eqref{eq:I}, this proves \eqref{eq:identity}.
  The remainder is finite because $s\geq1$, and it is strictly positive
  because its integrand is positive for $1<s<2$ and
  $z'\in(-1,1)^{N-1}$.
\end{proof}

\subsection{Perimeter decomposition and proof of nonexistence}

Fix an integer $M\geq2$ and abbreviate $\ell:=\ell_M$. Let
$E\in\A_M$ and write
$E=\bigcup_{i=1}^M C_i$, with $C_i=\ell(Q+a_i)$. Let
\[
  S(E):=\{(i,j):1\leq i<j\leq M,\ |a_i-a_j|=1\},
  \qquad c(E):=\#S(E).
\]
Thus $c(E)$ counts the shared faces, each counted once. Throughout this
section, an empty sum is understood to be zero.

\begin{lemma}\label{lem:decomposition}
  For every $E\in\A_M$,
  \begin{equation}\label{eq:perimeter}
    P(E)=2NM\ell^{N-1}-2c(E)\ell^{N-1}
  \end{equation}
  and
  \begin{equation}\label{eq:riesz-decomposition}
    V_\alpha(E)=M\ell^{N+\alpha}V_\alpha(Q)
                  +2\sum_{1\leq i<j\leq M}V_\alpha(C_i,C_j).
  \end{equation}
  Consequently,
  \begin{equation}\label{eq:energy-decomposition}
  \begin{split}
    \F_{K,\alpha}(E)-M\F_{K,\alpha}(\ell Q)
    ={}&2c(E)\ell^{N-1}
           \bigl(K\ell^{1+\alpha}V_\alpha(Q,Q+e_1)-1\bigr)+2K\!\sum_{\substack{1\leq i<j\leq M\\(i,j)\notin S(E)}}
           V_\alpha(C_i,C_j).
  \end{split}
  \end{equation}
\end{lemma}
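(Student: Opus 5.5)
The plan is to verify the two decompositions separately and then combine them.

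\emph{Perimeter.} Each cube $C_i=\ell(Q+a_i)$ has reduced boundary made of $2N$ closed faces of area $\ell^{N-1}$. Since the $a_i$ are distinct lattice points, the cubes have pairwise disjoint interiors. Hence, up to $\H^{N-1}$-null sets, $\partial^*E$ is the union of those faces that are not shared by two cubes. If two cubes $C_i,C_j$ share a full $(N-1)$-dimensional face, then $a_i-a_j=\pm e_k$ for some $k$, that is, $|a_i-a_j|=1$. Conversely, lattice points at distance $1$ differ by exactly one $\pm e_k$, so they share exactly one face. Other pairs meet at most in a set of dimension $\le N-2$, which is $\H^{N-1}$-negligible.

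On a shared face, $E$ has density $1$, since points on both sides belong to $E$. Such a face therefore does not belong to $\partial^*E$. On an unshared face, $E$ has density $1/2$ and the normal exists. This removes exactly $2\ell^{N-1}$ for each pair in $S(E)$ (the face counted once from each cube), which gives \eqref{eq:perimeter}.

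\emph{Riesz energy.} Since $\mathbf 1_E=\sum_i \mathbf 1_{C_i}$ almost everywhere (the overlaps are null), bilinearity and symmetry of $V_\alpha(\cdot,\cdot)$ give
\[
V_\alpha(E)=\sum_i V_\alpha(C_i)+2\sum_{i<j}V_\alpha(C_i,C_j).
\]
Translation invariance and \eqref{rescaling} give $V_\alpha(C_i)=\ell^{N+\alpha}V_\alpha(Q)$, which is \eqref{eq:riesz-decomposition}.

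\emph{Combination.} Substitute both decompositions into $\F_{K,\alpha}(E)$ and subtract $M\F_{K,\alpha}(\ell Q)=M(2N\ell^{N-1}+K\ell^{N+\alpha}V_\alpha(Q))$, using \eqref{eq:cube-energy}. Then split the pair sum into pairs in $S(E)$ and pairs not in $S(E)$. For $(i,j)\in S(E)$ we have $a_j-a_i=\pm e_k$, so by a translation and a coordinate permutation or reflection (both preserve $Q$ and the kernel), together with scaling,
\[
V_\alpha(C_i,C_j)=\ell^{N+\alpha}V_\alpha(Q,Q+e_1).
\]
Collecting the $c(E)$ such terms with the perimeter deficit $-2c(E)\ell^{N-1}$ gives
\[
2c(E)\ell^{N-1}\bigl(K\ell^{1+\alpha}V_\alpha(Q,Q+e_1)-1\bigr).
\]
The remaining pairs produce the last sum in \eqref{eq:energy-decomposition}.

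The only step needing care is the perimeter identification. I would make it rigorous by computing $D\mathbf 1_E=\sum_i D\mathbf 1_{C_i}$. The outer normals on a shared face cancel exactly, since they are opposite and of equal density. The remaining face measures do not overlap, so $|D\mathbf 1_E|$ is the sum over the unshared faces.
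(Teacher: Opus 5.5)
Your proof is correct and follows the same route as the paper. The paper also derives the perimeter formula by removing the two copies of each shared face, splits the Riesz energy bilinearly over the cubes, and uses translations, scaling and coordinate symmetries to reduce each face-adjacent pair to $\ell^{N+\alpha}V_\alpha(Q,Q+e_1)$. Your density argument and the identity $D\mathbf 1_E=\sum_i D\mathbf 1_{C_i}$ justify the reduced-boundary step that the paper only asserts.
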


\begin{proof}
  The sum of the individual cube perimeters is $2NM\ell^{N-1}$.
  At each shared face, the two copies of that face are absent from the
  reduced boundary of the union. Their total area is $2\ell^{N-1}$.
  All other intersections of cube closures have dimension at most $N-2$
  and hence zero $\mathcal H^{N-1}$ measure. This proves
  \eqref{eq:perimeter}. 

  Splitting the double integral over the cubes gives
  \eqref{eq:riesz-decomposition}. The diagonal terms are equal by
  translation invariance and scaling, and each unordered pair contributes
  twice its mutual interaction. If $(i,j)\in S(E)$, invariance under
  coordinate permutations and reflections yields
  \[
    V_\alpha(C_i,C_j)=\ell^{N+\alpha}V_\alpha(Q,Q+e_1).
  \]
  Separating these terms proves \eqref{eq:energy-decomposition}.
\end{proof}

\begin{proof}[Proof of Theorem~\ref{thm:nonexcubes}]
  Let $K\geq K_0(N,\alpha,M)$. By \eqref{eq:K0-equivalent} and
  Proposition~\ref{prop:identity},
  \begin{equation}\label{eq:strict-threshold}
  \begin{split}
    K\ell^{1+\alpha}V_\alpha(Q,Q+e_1)
    &\geq K_0\ell^{1+\alpha}V_\alpha(Q,Q+e_1)\\
    &=\frac{2N V_\alpha(Q,Q+e_1)}{\alpha V_\alpha(Q)}
      =1+\frac{2N R_{N,\alpha}}{\alpha V_\alpha(Q)}>1.
  \end{split}
  \end{equation}
Hence both terms on the right-hand side of
  \eqref{eq:energy-decomposition} are nonnegative. If $c(E)>0$, the first
  term is strictly positive. If $c(E)=0$, the second term contains all
  $M(M-1)/2$ pairs, each with strictly positive interaction. Since $M\geq2$,
  in either case
  \begin{equation}\label{eq:strict-lower-bound}
    \F_{K,\alpha}(E)>M\F_{K,\alpha}(\ell Q).
  \end{equation}
On the other hand, thanks to Remark~\ref{rmk:infimum}, we can infer that
  \[
    \inf_{E\in\A_M}\F_{K,\alpha}(E)
    =M\F_{K,\alpha}(\ell Q),
  \]
  which is exactly \eqref{eq:infimum}, recalling \eqref{eq:cube-energy}. The
  strict inequality \eqref{eq:strict-lower-bound} proves that the infimum
  is not attained.
 \end{proof}

\begin{remark}\label{rmk:Kl}
  Since the prescribed mass is $\omega_N$ and the integer $M$ is fixed, the
  side length $\ell=(\omega_N/M)^{1/N}$ is fixed. Hence the dimensionless
  parameter governing a face contact is
  $K\ell^{1+\alpha}V_\alpha(Q,Q+e_1)$, which grows linearly with $K$.
  Theorem~\ref{thm:nonexcubes} therefore describes a true strong-repulsion regime.
  No mass is sent to infinity as $K$ decreases, and no change in the number
  of cubes is hidden in the statement.
\end{remark}

\begin{remark}
The contribution of a single face contact in
\eqref{eq:energy-decomposition} is positive precisely when
\[
K\ell^{1+\alpha}V_\alpha(Q,Q+e_1)>1.
\]
Recalling the definition of $\ell$~\eqref{eq:ell-fixed}, the corresponding contact threshold is
\[
K_{\mathrm c}(N,\alpha,M)
:=
\frac{1}{V_\alpha(Q,Q+e_1)}
\left(\frac{M}{\omega_N}\right)^{\frac{1+\alpha}{N}}.
\]
Proposition~\ref{prop:identity} yields
\[
V_\alpha(Q,Q+e_1)
>
\frac{\alpha}{2N}V_\alpha(Q),
\]
and therefore
\[
K_{\mathrm c}(N,\alpha,M)<K_0(N,\alpha,M).
\]
In fact, the proof of Theorem~\ref{thm:nonexcubes} gives nonattainment
for every $K>K_{\mathrm c}$. We state the theorem with the larger threshold
$K_0$ because it arises naturally from the equivalent condition
$\ell\geq\rho_{K,\alpha}$ and depends only on the self-interaction
$V_\alpha(Q)$. Moreover, the strict remainder $R_{N,\alpha}>0$ in
Proposition~\ref{prop:identity} ensures that the required strict inequality
still holds at $K=K_0$. No optimality of $K_0$ as a nonexistence threshold
is claimed.
\end{remark}

%

\section*{Note on the use of AI }
We started working on this paper in 2023 and we presented the ideas contained in it at some workshops. However, we were able to show the strict positivity of the remainder $R_{N,\alpha}$ in the equality~\eqref{eq:identity} only for $\alpha\in (N-2-\delta, N)$ for a certain $\delta>0$, thus having a weaker result than Theorem~\ref{thm:nonexcubes}, which is valid for all $\alpha\in (0, N)$. The technical trick in Subsection  \ref{subsec:technical} yielding to the stronger present version of the result  has been obtained through interactions with AI.

\bibliographystyle{acm}
\bibliography{MPR}

\end{document}